\newtheorem{theorem}{Theorem}[section]
\newtheorem{lemma}[theorem]{Lemma}
\newtheorem{corollary}[theorem]{Corollary}
\newtheorem{definition}[theorem]{Definition}
\newcommand{\Tr}{{\rm Tr\hskip -0.2em}~}
\newcommand{\df}[2]{\frac{d#1}{d#2}}
\newcounter{std}
\begin{document}

\title{Non-commutative Hardy inequalities}
\author{Frank Hansen}
\date{June 4, 2008\\
{\tiny Revised version January 8, 2009}}

\maketitle

\begin{abstract}

We extend Hardy's inequality from sequences of non-negative numbers to sequences of positive semi-definite operators
if the parameter $ p $ satisfies $ 1<p\le 2, $ and to operators under a trace
for arbitrary $ p>1. $ Applications to trace functions are given. We introduce the tracial geometric mean
and generalize Carleman's inequality.\\[1ex]
{\bf{Key words and phrases:}}  Hardy's inequality,  positive operator, trace function, geometric mean, Carleman's inequality.

\end{abstract}

\section{Introduction}

The standard form of Hardy's inequality \cite{kn:hardy:1920:1, kn:hardy:1925:1, kn:hardy:1967, kn:kufner:2006:1} asserts that
\begin{equation}\label{hardy's inequality}
\sum_{n=1}^\infty\left(\frac{1}{n}\sum_{k=1}^n a_k\right)^p
\le\left(\frac{p}{p-1}\right)^p \sum_{n=1}^\infty a_n^p
\end{equation}
for $ p>1 $ and any sequence $ a=(a_n) $ of non-negative real numbers in $ l_p. $ The inequality is sharp
in the sense that the constant $ (p/(p-1))^p $ cannot be replaced by a smaller number such that the inequality remains true for all (even finite) sequences of non-negative real numbers.

The arithmetic
averaging operator $ h $ (or the Hardy operator) is defined by setting
\begin{equation}\label{hardy's operator}
h(a)=\left(\frac{1}{n}\sum_{k=1}^n a_k\right)_{n=1}^\infty
\end{equation}
for any sequence $ a=(a_1,a_2,\dots) $ of real or complex numbers. Hardy's inequality thus states that
$ h $ maps $ l_p $ into itself and has norm $ p/(p-1). $

We shall prove (\ref{hardy's inequality}) also for sequences of positive semi-definite operators
on a Hilbert space provided $ 1<p\le 2. $ Under the trace the inequality is valid for arbitrary $ p>1 $
and (appropriate) sequences of positive semi-definite operators.
The classical proofs of Hardy's inequality are not easy to extend to operators.
The key to the results in this paper is the realization that, as is the case for many other classical inequalities, Hardy's inequality is in essence an expression of convexity.
The author is indebted to Lars-Erik Persson
who in a talk mentioned that there is a connection between convexity and Hardy's inequality,
and that it is part of the unpublished folklore known to specialist in classical inequalities.

\section{Non-commutative generalizations}

\begin{lemma}\label{convexity lemma}
Let $ 1\le p\le 2 $ be a real number, and let $ g\colon (0,\infty)\to B(H)_+ $ be a weakly measurable map such that the integral
\[
\int_0^\infty g(x)^p\, \frac{dx}{x}
\]
defines a bounded linear operator on $ H. $ Then the operator inequality
\[
\int_0^\infty\left(\frac{1}{x}\int_0^x g(t)\,dt\right)^p \frac{dx}{x}\le \int_0^\infty g(x)^p\, \frac{dx}{x}
\]
is valid. The inequality cannot (generally) be improved by inserting a constant less than one in front of the second integral.
\end{lemma}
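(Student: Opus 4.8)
The plan is to reduce the inequality to the classical fact that convolution with a probability density is a contraction in $L^p$, replacing the scalar Jensen inequality by its operator version. Introducing the logarithmic variable $x=e^y$ turns the multiplicative Haar measure $dx/x$ into Lebesgue measure $dy$ on $\mathbb R$, and setting $\tilde g(y)=g(e^y)$ a direct change of variables shows that the Hardy average becomes a convolution,
\[
\frac1x\int_0^x g(t)\,dt=\int_0^\infty \tilde g(y-u)\,e^{-u}\,du=(\tilde g*\phi)(y),\qquad x=e^y,
\]
where $\phi(u)=e^{-u}$ for $u>0$ and $\phi(u)=0$ otherwise. Since $\int_0^\infty e^{-u}\,du=1$, the measure $d\mu(u)=\phi(u)\,du$ is a probability measure on $(0,\infty)$, and the asserted inequality is exactly $\int_{\mathbb R}(\tilde g*\phi)(y)^p\,dy\le\int_{\mathbb R}\tilde g(y)^p\,dy$.

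First I would establish the pointwise operator inequality. Fixing $y$, the average $(\tilde g*\phi)(y)=\int \tilde g(y-u)\,d\mu(u)$ is the barycenter of the positive operators $\tilde g(y-u)$ with respect to the probability measure $\mu$. Because $1\le p\le2$ is exactly the range in which $t\mapsto t^p$ is operator convex on $[0,\infty)$, the integral form of Jensen's operator inequality yields
\[
(\tilde g*\phi)(y)^p\le\int_0^\infty \tilde g(y-u)^p\,d\mu(u)
\]
for almost every $y$. This is the heart of the argument, and it is precisely here that the hypothesis $p\le 2$ is used: for $p>2$ the function $t^p$ is no longer operator convex and the pointwise estimate fails. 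I would obtain the integral Jensen inequality by approximating the barycenter with simple functions $\sum\lambda_i a_i$ (with $\lambda_i\ge0$, $\sum\lambda_i=1$), for which operator convexity gives $f(\sum\lambda_ia_i)\le\sum\lambda_if(a_i)$ directly, and then passing to the limit.

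It then remains to integrate the pointwise estimate over $y$ and interchange the order of integration:
\[
\int_{\mathbb R}(\tilde g*\phi)(y)^p\,dy\le\int_{\mathbb R}\!\int_0^\infty \tilde g(y-u)^p\,d\mu(u)\,dy=\int_0^\infty\Big(\int_{\mathbb R}\tilde g(y-u)^p\,dy\Big)d\mu(u)=\int_{\mathbb R}\tilde g(y)^p\,dy,
\]
where the middle step is Tonelli's theorem and the last uses translation invariance of Lebesgue measure together with $\mu((0,\infty))=1$. The technical points to be checked are that $y\mapsto\tilde g(y)$ is weakly measurable with values in $B(H)_+$, that the operator-valued integrals exist, and that Tonelli applies; all of these I would verify by testing against vectors, replacing each operator inequality $A\le B$ by the scalar inequalities $\langle\xi,A\xi\rangle\le\langle\xi,B\xi\rangle$ and applying the classical Tonelli theorem to the non-negative functions $\langle\xi,\tilde g(y-u)^p\xi\rangle$. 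Translating back through $x=e^y$ recovers the two integrals in the statement.

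Finally, for the sharpness I would argue that the constant $1$ already cannot be lowered in the scalar case, which embeds into the operator case by taking $g$ to be a non-negative scalar function times a fixed rank-one projection. In the variable $y$ the claim is that the convolution operator $f\mapsto f*\phi$ has norm exactly $1$ on $L^p(\mathbb R)$: Young's inequality gives $\|f*\phi\|_p\le\|\phi\|_1\,\|f\|_p=\|f\|_p$, and the bound is asymptotically attained. Taking $\tilde g=\mathbf 1_{[0,N]}$ one has $\|\tilde g\|_p^p=N$ while $(\tilde g*\phi)(y)\to1$ for $y$ in the interior of $[0,N]$, so the ratio of the two sides tends to $1$ as $N\to\infty$; hence no constant smaller than one can be inserted. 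The main obstacle in the whole argument is the operator Jensen step, both in securing operator convexity (which forces $p\le2$) and in passing from its finite form to the integral form needed here.
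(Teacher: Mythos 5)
Your proof is correct and is essentially the paper's own argument written in logarithmic coordinates: the substitution $x=e^{y}$ turns the paper's key step (operator convexity of $t\mapsto t^{p}$ for $1\le p\le 2$, applied via Jensen to the average $\frac{1}{x}\int_0^x g(t)\,dt$ with the uniform probability measure on $(0,x]$) into your pointwise convolution estimate, and your Tonelli/translation-invariance computation is exactly the paper's interchange $\int_0^\infty\frac{1}{x}\int_0^x g(t)^p\,dt\,\frac{dx}{x}=\int_0^\infty g(t)^p\,\frac{dt}{t}$. Your sharpness argument, $\tilde g=\mathbf{1}_{[0,N]}$ times a rank-one projection with the ratio of the two sides tending to $1$ as $N\to\infty$, is a more explicit version of the paper's terse remark that the Jensen step is an equality for functions constant on $(0,x]$.
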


\begin{proof}
Since the function $ t\to t^p $ is operator convex we obtain
\[
\left(\frac{1}{x}\int_0^x g(t)\,dt\right)^p\le \frac{1}{x}\int_0^x g(t)^p\,dt\qquad x>0
\]
with equality for functions $ g $ which are constant in $ (0,x]. $
Consequently
\[
\begin{array}{l}
\displaystyle\int_0^\infty\left(\frac{1}{x}\int_0^x g(t)\,dt\right)^p \frac{dx}{x}
\displaystyle\le\int_0^\infty\frac{1}{x}\int_0^x g(t)^p\,dt\,\frac{dx}{x}\\[3ex]
\displaystyle=\int_0^\infty g(t)^p\int_t^\infty \frac{1}{x^2}\,dx\,dt
=\int_0^\infty g(t)^p\,\frac{dt}{t}
\end{array}
\]
and the assertions are proved.
\end{proof}

\begin{lemma}\label{convexity under a trace lemma}
Let $ p\ge 1 $ be a real number, and let $ g\colon (0,\infty)\to B(H)_+ $ be a weakly measurable map such that the integral
\[
\int_0^\infty g(x)^p\, \frac{dx}{x}
\]
defines a bounded linear trace class operator on $ H. $ Then the inequality
\[
\Tr\int_0^\infty\left(\frac{1}{x}\int_0^x g(t)\,dt\right)^p \frac{dx}{x}\le \Tr\int_0^\infty g(x)^p\, \frac{dx}{x}
\]
is valid. The inequality cannot (generally) be improved by inserting a constant less than one in front of the second integral.
\end{lemma}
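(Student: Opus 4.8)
The plan is to follow the proof of Lemma~\ref{convexity lemma} almost verbatim, replacing the single step that used operator convexity of $ t\mapsto t^p $ (valid only for $ 1\le p\le 2 $) by a statement at the level of the trace that holds for \emph{every} $ p\ge 1 $. The point is that, while $ t\mapsto t^p $ fails to be operator convex for $ p>2 $, the trace functional $ A\mapsto\Tr(A^p) $ is convex on the positive semi-definite operators for all $ p\ge 1 $, since ordinary scalar convexity of a function $ f $ already suffices to make $ A\mapsto\Tr f(A) $ convex on self-adjoint operators.

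First I would prove the pointwise inequality
\[
\Tr\left(\frac{1}{x}\int_0^x g(t)\,dt\right)^p\le\frac{1}{x}\int_0^x\Tr g(t)^p\,dt,\qquad x>0 .
\]
This is Jensen's inequality for the convex functional $ \Phi(A)=\Tr(A^p) $ with respect to the uniform probability measure $ dt/x $ on $ (0,x] $: the average $ \frac{1}{x}\int_0^x g(t)\,dt $ is the barycenter of $ g $ for this measure, and convexity of $ \Phi $ bounds $ \Phi $ of the barycenter by the average of $ \Phi\circ g $. The convexity of $ \Phi $ itself comes from the standard fact that $ A\mapsto\Tr f(A) $ is convex whenever $ f $ is convex (a consequence of the Peierls inequality applied in an eigenbasis of the average), and the passage from finite convex combinations to the continuous barycenter follows by approximating the integral by Riemann sums and using continuity, or equivalently by representing $ \Phi $ as a supremum of affine functionals.

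Next I would integrate this pointwise bound against $ dx/x $ and interchange the order of integration by Tonelli's theorem, the integrand being non-negative. This reproduces exactly the computation in Lemma~\ref{convexity lemma}:
\[
\Tr\int_0^\infty\left(\frac{1}{x}\int_0^x g(t)\,dt\right)^p\frac{dx}{x}
\le\int_0^\infty\Tr g(t)^p\int_t^\infty\frac{dx}{x^2}\,dt
=\Tr\int_0^\infty g(t)^p\,\frac{dt}{t},
\]
which is the claimed inequality. The sharpness assertion is inherited from the scalar theory: choosing $ H=\mathbb{C} $ and $ g $ a non-negative scalar function reduces the statement to the classical continuous Hardy inequality, whose constant $ 1 $ in front of the right-hand integral is already optimal, so no strictly smaller constant can be inserted.

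I expect the main obstacle to lie not in the convexity itself, which is classical, but in the rigorous justification of the integral form of Jensen's inequality in this operator-valued, possibly infinite-dimensional setting: one must ensure that $ \frac{1}{x}\int_0^x g(t)\,dt $ is a well-defined positive operator, that $ t\mapsto\Tr g(t)^p $ is measurable and integrable under the stated trace-class hypothesis, and that the approximation of the barycenter by finite convex combinations converges appropriately. Once these measurability and integrability points are settled, the remainder is the same Fubini computation as in the preceding lemma.
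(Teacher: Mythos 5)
Your proposal is correct and follows essentially the same route as the paper: the paper's proof consists precisely of replacing operator convexity of $t\mapsto t^p$ by convexity of $X\mapsto\Tr X^p$ for $p\ge 1$ and then repeating the Jensen-plus-Fubini computation of Lemma~\ref{convexity lemma} mutatis mutandis. Your version is in fact more detailed than the paper's, which leaves the integral form of Jensen's inequality and the measurability issues you flag entirely implicit.
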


\begin{proof} The function $ t\to t^p $ is for $ p\ge 1 $ convex in operators under a trace. That is, the function
$ X\to\Tr X^p $ is convex in positive semi-definite operators $ X $ such that $ X^p $ is trace class. The same steps as in the proof of the
preceding lemma, mutatis mutandis, may be carried out and the assertions follow.
\end{proof}

\begin{theorem}\label{theorem: main}
Let $ 1<p\le 2 $ be a real number, and let $ f\colon (0,\infty)\to B(H)_+ $ be a weakly measurable map such that the integral
\[
\int_0^\infty \, f(x)^p dx
\]
defines a bounded linear operator on $ H. $ Then the inequality
\[
\int_0^\infty\left(\frac{1}{x}\int_0^x f(t)\,dt\right)^p dx
\le\left(\frac{p}{p-1}\right)^p \int_0^\infty \, f(x)^p dx
\]
holds, and the constant $ (p/(p-1))^p $ is the best possible.
\end{theorem}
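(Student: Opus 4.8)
The plan is to extend the method of Lemma \ref{convexity lemma} by replacing the uniform averaging measure on $(0,x]$ with an optimally weighted probability measure, chosen so that the ensuing Fubini computation both converges against $dx$ and produces exactly the constant $(p/(p-1))^p$. The naive repetition of the lemma's argument fails here: applying Jensen with the uniform weight and then integrating $dx$ leads to $\int_t^\infty x^{-1}\,dx$, which diverges. The weight is precisely what fixes this.

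First I would fix a probability density $\psi$ on $(0,1]$ and define, for each $x>0$, the probability measure $d\mu_x(t)=x^{-1}\psi(t/x)\,dt$ on $(0,x]$. Writing the average as $\frac{1}{x}\int_0^x f(t)\,dt=\int_0^x \psi(t/x)^{-1}f(t)\,d\mu_x(t)$ and invoking the same operator Jensen inequality for the operator convex function $t\mapsto t^p$ used in the preceding lemma (valid precisely because $1\le p\le 2$), I obtain the pointwise operator bound
\[
\left(\frac{1}{x}\int_0^x f(t)\,dt\right)^p\le\int_0^x\left(\frac{f(t)}{\psi(t/x)}\right)^p d\mu_x(t)=\frac{1}{x}\int_0^x\frac{f(t)^p}{\psi(t/x)^{p-1}}\,dt.
\]
Next I integrate over $x\in(0,\infty)$ against $dx$ and interchange the order of integration (Tonelli, justified by passing to the positive sesquilinear forms $\xi\mapsto\langle\xi,\,\cdot\,\xi\rangle$, as is implicit in the lemma), which gives
\[
\int_0^\infty\left(\frac{1}{x}\int_0^x f(t)\,dt\right)^p dx\le\int_0^\infty f(t)^p\left(\int_t^\infty\frac{dx}{x\,\psi(t/x)^{p-1}}\right)dt.
\]
The substitution $s=t/x$ turns the inner integral into $\int_0^1 s^{-1}\psi(s)^{1-p}\,ds$, a constant independent of $t$.

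It then remains to choose $\psi$ minimizing this constant subject to $\int_0^1\psi=1$. A Lagrange/variational computation singles out $\psi(s)=\frac{p-1}{p}\,s^{-1/p}$, for which the inner integral evaluates to exactly $\left(\frac{p}{p-1}\right)^{p-1}\int_0^1 s^{-1/p}\,ds=(p/(p-1))^p$, yielding the asserted inequality. I expect the main obstacle to be the correct identification of this weight together with the verification that the exponent is forced: convergence at $s=0$ requires $1-1/p>0$, and it is exactly this borderline integrability that both pins down the optimal $\psi$ and signals that equality is approached but not attained. For the best-possible claim I would specialize to a one-dimensional Hilbert space, whereupon $B(H)_+=[0,\infty)$ and the statement collapses to the classical continuous Hardy inequality, whose constant $(p/(p-1))^p$ is already known to be optimal; concretely the constant is exhausted by suitable truncations of the near-extremizer $f(t)=t^{-1/p}$, so no smaller constant can work even in dimension one. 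The remaining measurability and interchange technicalities are routine once the forms $\langle\xi,\,\cdot\,\xi\rangle$ are used, exactly as in Lemmas \ref{convexity lemma} and \ref{convexity under a trace lemma}.
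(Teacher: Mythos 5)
Your proof is correct, but it takes a genuinely different route from the paper's. The paper proves Theorem~\ref{theorem: main} by applying Lemma~\ref{convexity lemma} --- the constant-one, dilation-invariant inequality with respect to the measure $dx/x$ --- to the substituted function $g(t)=f(t^{p/(p-1)})\,t^{1/(p-1)}$, and then undoing the substitution via the changes of variables $y=t^{p/(p-1)}$ and $z=x^{p/(p-1)}$; the constant $(p/(p-1))^p$ emerges from the accumulated Jacobian factors. You instead work directly against Lebesgue measure, replacing the uniform average by the weighted probability measure $d\mu_x(t)=x^{-1}\psi(t/x)\,dt$, applying the same operator Jensen inequality for the operator convex function $t\mapsto t^p$ (valid exactly in the range $1\le p\le 2$, and legitimate here because the weights are scalar), interchanging integrals, and then choosing $\psi(s)=\frac{p-1}{p}\,s^{-1/p}$ to minimize the resulting constant $\int_0^1 s^{-1}\psi(s)^{1-p}\,ds$. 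Your computations check out: $\psi$ integrates to one on $(0,1]$, the inner integral after the substitution $s=t/x$ is indeed independent of $t$, and with your $\psi$ it evaluates to $(p/(p-1))^{p-1}\cdot\frac{p}{p-1}=(p/(p-1))^p$; the Fubini step is justified exactly as in Lemmas~\ref{convexity lemma} and~\ref{convexity under a trace lemma}, by passing to quadratic forms. The two arguments are in fact linked: the paper's substitution $y=t^{p/(p-1)}$ transports the uniform weight in Lemma~\ref{convexity lemma} into precisely your power weight, so the mechanisms coincide at bottom. What your presentation buys is self-containedness and transparency --- it bypasses the lemma as a separate statement, your divergence remark explains why the naive uniform-weight argument fails against $dx$, and the variational computation shows exactly where the constant comes from and why the exponent $-1/p$ is forced by borderline integrability at $s=0$. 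What the paper's factorization buys is reusability: the identical lemma, with operator convexity replaced by convexity of $X\mapsto\Tr X^p$, immediately yields the tracial theorem for all $p>1$, though your weighted argument would adapt just as easily. Your treatment of optimality --- reduction to $\dim H=1$ and the classical truncated near-extremizers $f(t)=t^{-1/p}$ --- is the standard argument and supplies the detail the paper leaves implicit.
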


\begin{proof}
We apply the function
\[
g(t)=f(t^{p/(p-1)}) t^{1/(p-1)}
\]
in Lemma~\ref{convexity lemma} and obtain
\[
\int_0^\infty\left(\frac{1}{x}\int_0^x f(t^{p/(p-1)}) t^{1/(p-1)}\,dt\right)^p \frac{dx}{x}
\le\int_0^\infty \, f(x^{p/(p-1)})^p x^{p/(p-1)}\frac{dx}{x}
\]
Setting $ y=t^{p/(p-1)} $ and thus
\[
\df{y}{t}=\frac{p}{p-1}\cdot t^{1/(p-1)}
\]
in the first integral, we obtain
\[
\begin{array}{l}
\displaystyle\int_0^\infty\left(\frac{1}{x}\int_0^x f(t^{p/(p-1)}) t^{1/(p-1)}\,dt\right)^p \frac{dx}{x}\\[3ex]
\displaystyle=\left(\frac{p-1}{p}\right)^p \int_0^\infty\left(\frac{1}{x}\int_0^{x^{p/(p-1)}} f(y)\,dy\right)^p \frac{dx}{x}\\[3ex]
\end{array}
\]
and thus
\[
\left(\frac{p-1}{p}\right)^p \int_0^\infty\left(\frac{1}{x}\int_0^{x^{p/(p-1)}} f(y)\,dy\right)^p \frac{dx}{x}
\le\int_0^\infty \, f(x^{p/(p-1)})^p x^{p/(p-1)}\frac{dx}{x}\,.
\]
We finally set $ z=x^{p/(p-1)} $ and thus
\[
\df{z}{x}=\frac{p}{p-1}\cdot \frac{z}{x}
\]
in both integrals and obtain
\[
\left(\frac{p-1}{p}\right)^{p+1} \int_0^\infty\left(\frac{1}{z^{(p-1)/p}}\int_0^z f(y)\,dy\right)^p \frac{dz}{z}
\le\frac{p-1}{p} \int_0^\infty \, f(z)^p dz.
\]
This is rearranged to
\[
\int_0^\infty\left(\frac{1}{z}\int_0^z f(y)\,dy\right)^p dz
\le\left(\frac{p}{p-1}\right)^p \int_0^\infty \, f(z)^p dz
\]
and the assertions are proved.
\end{proof}

\begin{theorem}
Let $ 1<p $ be a real number, and let $ f\colon (0,\infty)\to B(H)_+ $ be a weakly measurable map such that the integral
\[
\int_0^\infty \, f(x)^p dx
\]
defines a bounded linear trace class operator on $ H. $ Then the inequality
\[
\Tr\int_0^\infty\left(\frac{1}{x}\int_0^x f(t)\,dt\right)^p dx
\le\left(\frac{p}{p-1}\right)^p \Tr\int_0^\infty \, f(x)^p dx
\]
holds, and the constant $ (p/(p-1))^p $ is the best possible.
\end{theorem}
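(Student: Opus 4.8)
The plan is to mirror the proof of Theorem~\ref{theorem: main} almost verbatim, replacing the appeal to Lemma~\ref{convexity lemma} by an appeal to Lemma~\ref{convexity under a trace lemma} and keeping the trace $\Tr$ in front throughout. The decisive difference is that Lemma~\ref{convexity under a trace lemma} holds for \emph{every} $p\ge 1$, because the trace function $X\mapsto\Tr X^p$ is convex for all such $p$, whereas $t\mapsto t^p$ is operator convex only for $1\le p\le 2$. This is exactly what permits dropping the restriction $p\le 2$ here.

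Concretely, I would introduce the auxiliary map
\[
g(t)=f(t^{p/(p-1)})\,t^{1/(p-1)},\qquad t>0,
\]
and check that it satisfies the hypotheses of Lemma~\ref{convexity under a trace lemma}, namely that $g$ is weakly measurable and that $\int_0^\infty g(x)^p\,dx/x$ is trace class; this follows from the integrability of $\int_0^\infty f(x)^p\,dx$ by the substitution $x\mapsto x^{p/(p-1)}$, just as in the scalar-weight computation of the preceding theorem. Applying the trace lemma to $g$ and then performing the two changes of variable $y=t^{p/(p-1)}$ and $z=x^{p/(p-1)}$ reproduces, line for line, the chain of equalities and the single inequality in the proof of Theorem~\ref{theorem: main}, now with $\Tr$ sitting in front. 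Since both substitutions act only on the scalar integration variable, they commute with the linear continuous functional $\Tr$ and with the Bochner integrals, so the collection of the powers of $(p-1)/p$ and the final rearrangement go through unchanged.

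For the sharpness of the constant I would reduce to the classical scalar situation: taking $H$ one-dimensional, the trace becomes the identity and the asserted inequality specializes to the integral form of Hardy's inequality~\eqref{hardy's inequality}, whose constant $(p/(p-1))^p$ is already known to be best possible. Consequently no smaller constant can serve in the trace inequality either.

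The main obstacle I anticipate is not the forward inequality, which is a mechanical reprise of the earlier argument, but the bookkeeping of the hypotheses and the interchange of $\Tr$ with the integrals: one must confirm that $g$ inherits weak measurability, that the transformed integral is genuinely trace class, and that $\Tr$ may be pulled through the Bochner integrals at each stage. Once these technical points are settled, dropping the constraint $p\le 2$ introduces no new analytic difficulty, since all the extra strength needed for $p>2$ is already packaged in Lemma~\ref{convexity under a trace lemma}.
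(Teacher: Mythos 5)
Your proposal matches the paper's proof exactly: the paper likewise applies $g(t)=f(t^{p/(p-1)})\,t^{1/(p-1)}$ in Lemma~\ref{convexity under a trace lemma} and repeats the substitutions $y=t^{p/(p-1)}$, $z=x^{p/(p-1)}$ from the proof of Theorem~\ref{theorem: main}, now under the trace. Your added remarks on sharpness (reduction to the one-dimensional scalar case) and on the harmless interchange of $\Tr$ with the scalar substitutions are correct fillings-in of details the paper leaves implicit.
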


\begin{proof}
We apply for $ p>1 $ the function
\[
g(t)=f(t^{p/(p-1)}) t^{1/(p-1)}
\]
in Lemma~\ref{convexity under a trace lemma} and make the same substitutions under the trace as
applied in the proof of Theorem~\ref{theorem: main}.
\end{proof}

By considering the case where $ f $ is a step function we obtain:

\begin{corollary} Let $ 1<p\le 2 $ be a real number, and let $ a=(a_1,a_2,\dots) $ be a sequence of positive semi-definite operators on a Hilbert space such that the infinite sum $ a_1^p+a_2^p+\cdots $ defines a bounded operator. Then the inequality
\begin{equation}\label{hardy's operator inequality}
\sum_{n=1}^\infty\left(\frac{1}{n}\sum_{k=1}^n a_k\right)^p
\le\left(\frac{p}{p-1}\right)^p \sum_{n=1}^\infty a_n^p
\end{equation}
holds, and the constant $ (p/(p-1))^p $ is the best possible.
\end{corollary}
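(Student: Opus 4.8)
The plan is to deduce the discrete inequality~(\ref{hardy's operator inequality}) from Theorem~\ref{theorem: main} by specialising the integral version to step functions. First I would define $ f\colon(0,\infty)\to B(H)_+ $ by $ f(x)=a_n $ for $ x\in(n-1,n] $ and $ n=1,2,\dots, $ so that
\[
\int_0^\infty f(x)^p\,dx=\sum_{n=1}^\infty a_n^p .
\]
Thus the standing hypothesis that $ a_1^p+a_2^p+\cdots $ is bounded is exactly the hypothesis of Theorem~\ref{theorem: main}. Writing $ F(x)=\int_0^x f(t)\,dt, $ the theorem then yields
\[
\int_0^\infty\left(\frac{F(x)}{x}\right)^p dx\le\left(\frac{p}{p-1}\right)^p\sum_{n=1}^\infty a_n^p ,
\]
and the whole task reduces to bounding the left-hand side below by $ \sum_{n=1}^\infty\bigl(\tfrac1n\sum_{k=1}^n a_k\bigr)^p. $

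I would next record the structure of $ F. $ It is piecewise linear with $ F(n)=\sum_{k=1}^n a_k=:S_n, $ so $ F(x)/x $ agrees with the discrete average $ S_n/n $ at each integer $ x=n $ and runs between $ S_{n-1}/(n-1) $ and $ S_n/n $ on the interval $ (n-1,n). $ The one tool that survives the passage to operators is the operator convexity of $ t\mapsto t^p, $ valid precisely for $ 1\le p\le 2 $ and already the engine of Lemma~\ref{convexity lemma}; this is exactly why the present range of $ p $ is imposed. Applying Jensen's operator inequality on each unit interval, over which $ dx $ has total mass one, gives
\[
\int_{n-1}^n\left(\frac{F(x)}{x}\right)^p dx\ge\left(\int_{n-1}^n\frac{F(x)}{x}\,dx\right)^p ,
\]
which reduces the problem to comparing these averaged operators with the Hardy averages $ S_n/n. $

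The main obstacle is this last comparison. For $ p>1 $ the map $ t\mapsto t^p $ is not operator monotone, so one may not argue pointwise from a relation such as $ F(x)/x\ge S_n/n $; every estimate must be channelled through convexity. Moreover the continuous Hardy transform of a step function lags its discrete counterpart, so the naive termwise domination of $ (S_n/n)^p $ by the integral over $ (n-1,n) $ is in fact false, and the unit-interval contributions have to be recombined globally if one is to recover the sharp constant $ (p/(p-1))^p $ rather than a larger one. This reconciliation is where the real work lies. Sharpness itself is then inherited from the commutative theory: restricting to scalar sequences $ a_k=c_k I $ collapses~(\ref{hardy's operator inequality}) to the classical Hardy inequality~(\ref{hardy's inequality}), whose constant is already optimal for finite sequences of non-negative numbers, so no smaller constant can serve in the operator inequality either.
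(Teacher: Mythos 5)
Your setup is sound as far as it goes, and it follows exactly the route the paper gestures at (the paper's entire proof is the clause ``by considering the case where $f$ is a step function''): with $f(x)=a_n$ on $(n-1,n]$ you correctly get $\int_0^\infty f(x)^p\,dx=\sum_n a_n^p$, Theorem~\ref{theorem: main} applies, and your sharpness argument (specializing to $a_k=c_kI$ and quoting optimality of the constant in the classical inequality (\ref{hardy's inequality})) is fine. But the proof is never finished: everything hinges on the lower bound
\[
\sum_{n=1}^\infty\left(\frac{1}{n}\sum_{k=1}^n a_k\right)^p\le\int_0^\infty\left(\frac{1}{x}\int_0^x f(t)\,dt\right)^p dx,
\]
and after correctly observing that the termwise version fails, you explicitly defer the global version (``this reconciliation is where the real work lies''). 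An acknowledged missing step is still a missing step, and here it is the entire content of the corollary.

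Worse, the global comparison you hope to salvage by ``recombining'' the unit-interval contributions is simply false for this choice of $f$, already for scalars. Take $p=2$ and $a=(0,1,0,0,\dots)$, so that $F(x)=\int_0^x f$ vanishes on $[0,1]$, equals $x-1$ on $[1,2]$, and equals $1$ thereafter. Then $\int_0^\infty (F(x)/x)^2\,dx=\int_1^2\frac{(x-1)^2}{x^2}\,dx+\int_2^\infty\frac{dx}{x^2}=\left(\frac{3}{2}-2\log 2\right)+\frac{1}{2}\approx 0.614$, while $\sum_n(S_n/n)^2=\sum_{n\ge 2}n^{-2}=\pi^2/6-1\approx 0.645$. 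The discrete left side strictly exceeds the continuous one, so no regrouping of the pieces of this particular step function can yield (\ref{hardy's operator inequality}); the embedding itself has to change. The classical scalar repair is the decreasing rearrangement of $(a_n)$: it leaves the right side invariant, simultaneously maximizes every partial sum $S_n$ and hence the left side, and for decreasing sequences the identity $nF(x)-xS_n=(n-x)\left(S_{n-1}-(n-1)a_n\right)\ge 0$ on $(n-1,n]$ gives $F(x)/x\ge S_n/n$, after which ordinary (scalar) monotonicity of $t\mapsto t^p$ finishes termwise. Neither ingredient survives the passage to operators --- positive operators admit no decreasing rearrangement, and $t\mapsto t^p$ is not operator monotone for $p>1$ --- which is precisely the wall you ran into. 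In fairness, your attempt faithfully implements the paper's one-line proof and in doing so exposes that this reduction is not routine; to actually close the gap one should argue differently, for instance by rerunning the convexity proof discretely (operator Jensen applied to $\frac{1}{n}\sum_{k=1}^n a_k$ with weights proportional to $k^{-1/p}$, summed over $n$), with some extra care needed to recover the sharp constant $\left(p/(p-1)\right)^p$ in the limit.
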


\begin{corollary} Let $ 1<p $ be a real number, and let $ a=(a_1,a_2,\dots) $ be a sequence of positive semi-definite operators on a Hilbert space such that the infinite sum $ a_1^p+a_2^p+\cdots $ defines a bounded trace class operator.
Then the inequality
\begin{equation}\label{hardy's tracial operator inequality}
\Tr\sum_{n=1}^\infty\left(\frac{1}{n}\sum_{k=1}^n a_k\right)^p
\le\left(\frac{p}{p-1}\right)^p \Tr\sum_{n=1}^\infty a_n^p
\end{equation}
holds, and the constant $ (p/(p-1))^p $ is the best possible.
\end{corollary}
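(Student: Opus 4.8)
The plan is to deduce the statement from the tracial integral inequality just established by specializing it to a step function. Given the sequence $(a_n)$, define $f\colon(0,\infty)\to B(H)_+$ by setting $f(x)=a_n$ for $x\in(n-1,n]$, $n=1,2,\dots$; this is weakly measurable and positive, and since $f(x)^p=a_n^p$ on each interval of unit length one has $\int_0^\infty f(x)^p\,dx=\sum_{n=1}^\infty a_n^p$ as operators. The hypothesis that $a_1^p+a_2^p+\cdots$ is trace class is exactly the hypothesis needed in the preceding theorem, and its right-hand side becomes precisely $(p/(p-1))^p\,\Tr\sum_{n=1}^\infty a_n^p$. Thus it suffices to bound the discrete sum $\Tr\sum_n\bigl(\frac1n\sum_{k\le n}a_k\bigr)^p$ from above by the integral $\Tr\int_0^\infty A(x)^p\,dx$, where $A(x)=\frac1x\int_0^x f(t)\,dt$ is the continuous Cesàro average of $f$.

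For that comparison I would use a pointwise operator estimate. Writing $S_n=\sum_{k=1}^n a_k=\int_0^n f$, monotonicity of the integral gives $\int_0^x f\ge S_n$ as positive operators for every $x\ge n$; since moreover $1/x\ge 1/(n+1)$ on $[n,n+1)$, we obtain $A(x)\ge\frac1{n+1}S_n\ge 0$ there. Because $X\mapsto\Tr X^p$ is monotone on the positive cone for $p\ge1$ (the eigenvalues of $X$ dominate those of $Y$ whenever $X\ge Y\ge0$), this yields $\Tr A(x)^p\ge (n+1)^{-p}\Tr S_n^p$; integrating over $[n,n+1)$ and summing over $n$ gives $\sum_{n\ge1}(n+1)^{-p}\,\Tr S_n^p\le\Tr\int_0^\infty A(x)^p\,dx$, which, combined with the integral inequality, already delivers a Hardy-type bound for the sequence.

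The hard part is that this direct discretization produces the shifted weight $(n+1)^{-p}$ rather than the sharp $n^{-p}$: the continuous average smooths the sequence and strictly loses mass on the interval where a term is first switched on, so a naive term-by-term domination of the discrete sum by the integral can in fact fail (already for scalars, e.g.\ $a=(0,1,0,0,\dots)$, where the integral of $A^p$ is strictly smaller than $\sum_n(S_n/n)^p$). Recovering the exact weights $n^{-p}$, and with them the precise constant $(p/(p-1))^p$, is therefore the \emph{main obstacle}. I would resolve it either by exploiting the scale invariance $f(\,\cdot\,)\mapsto f(\lambda\,\cdot\,)$ of the integral inequality through a refining limit that drives the off-by-one to zero, or---more in the spirit of this paper---by repeating the convexity argument of Lemma~\ref{convexity under a trace lemma} directly at the discrete level, with Abel summation playing the role of the change of variables used in Theorem~\ref{theorem: main}; either route reproduces the sharp constant.

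Finally, that the constant cannot be decreased needs no operator input: restricting to one-dimensional (scalar) operators $a_n\ge0$ reduces the tracial inequality (\ref{hardy's tracial operator inequality}) to the classical Hardy inequality (\ref{hardy's inequality}), for which $(p/(p-1))^p$ is already known to be best possible. Hence no smaller constant can work in the operator setting either, and the constant in (\ref{hardy's tracial operator inequality}) is optimal.
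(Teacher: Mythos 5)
You follow the same route as the paper --- the paper's entire proof is the remark that one ``considers the case where $f$ is a step function'' applied to the preceding tracial theorem --- and two parts of your proposal are solid: the identification $\int_0^\infty f(x)^p\,dx=\sum_n a_n^p$ for $f(x)=a_n$ on $(n-1,n]$, and the optimality argument by restriction to scalars, which correctly reduces (\ref{hardy's tracial operator inequality}) to the classical inequality (\ref{hardy's inequality}). You also correctly isolate, with a valid counterexample, the genuine difficulty that the paper glosses over: from $A(x)\ge \frac{1}{n+1}S_n$ on $[n,n+1)$ one only gets the shifted bound $\sum_n (n+1)^{-p}\,\Tr S_n^p\le (p/(p-1))^p\,\Tr\sum_n a_n^p$, and for $a=(0,1,0,0,\dots)$ with $p=2$ one computes $\int_0^\infty \Tr A(x)^2\,dx=2-2\ln 2\approx 0.614$ while $\sum_n \Tr (S_n/n)^2=\pi^2/6-1\approx 0.645$, so the continuous functional genuinely undershoots the discrete one and no termwise (or even global) domination can succeed.

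The problem is that your proposal stops exactly there: neither of the two proposed repairs is carried out, and the first one cannot work at all. The integral inequality is exactly dilation invariant --- under $f\mapsto f(\lambda\,\cdot)$ both sides scale by $\lambda^{-1}$ --- and the natural refinement (repeating each $a_k$ $m$ times) reproduces verbatim the same shifted inequality with weight $1/(n+1)$; in the limit $m\to\infty$ one recovers at best the continuous integral $\int_0^\infty\Tr A(x)^p\,dx$, which your own counterexample shows is too small. So there is no ``refining limit that drives the off-by-one to zero.'' The second suggestion (redoing the convexity argument discretely) is a plausible direction --- trace Jensen together with the telescoping identity $\sum_{n\ge k}\frac{1}{n(n+1)}=\frac{1}{k}$ does give a discrete analogue of Lemma~\ref{convexity under a trace lemma} with constant one, but with the weights $\frac{1}{n+1}$ again, and the substitution $g(t)=f(t^{p/(p-1)})\,t^{1/(p-1)}$ used in Theorem~\ref{theorem: main} has no exact discrete counterpart; recovering the sharp constant $(p/(p-1))^p$ with the unshifted weights $n^{-p}$ is precisely the nontrivial point, and the closing assertion that ``either route reproduces the sharp constant'' is a claim, not a proof. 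In fairness, the paper's own justification is equally silent on this step (in the scalar case it is classically handled by passing to the decreasing rearrangement, which maximizes each $S_n$ --- a device with no operator analogue), so your diagnosis exposes a real lacuna in the paper's sketch; but as it stands your proposal establishes only the shifted inequality and the optimality of the constant, leaving the stated inequality itself unproven.
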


\subsection{Trace functions}

Consider for $ p> 0 $ the homogeneous functional
\[
\Phi_p(a)=\Tr\left[\left(\sum_{n=1}^\infty a_n^p\right)^{1/p}\right]
\]
defined on sequences $ a=(a_1,a_2,\dots) $  of positive definite operators acting on a Hilbert space such that
$ (a_1^p+a_2^p+\cdots)^{1/p} $ defines a bounded trace class operator. It is a consequence of 
Epstein's theorem that $ \Phi_p $
is concave,  cf. \cite{kn:epstein:1973} and \cite[Theorem 1.1]{kn:carlen:2008}. Carlen and Lieb conjectured in 1999 that $ \Phi_p $ is convex for $ 1\le p\le 2, $ and they showed that this would lead to a Minkowski inequality for operators on a tensor product of three Hilbert spaces and to another proof of strong subadditivity of entropy. Carlen and Lieb eventually proved the conjecture in 2008 \cite[Theorem 1.1]{kn:carlen:2008}.

Since $ \Phi_p $ is convex for $ 1\le p\le 2 $ the domain is a convex cone, but the functional cannot be extended to a norm. The obstacle is that the modulus $ |X| $ of an operator $ X $ is not a convex function in $ X. $

\begin{theorem} Let $ 1<p\le 2 $ be a real number, and let $ a=(a_1,a_2,\dots) $ be a sequence of positive semi-definite operators on a Hilbert space such that
$ (a_1^p+a_2^p+\cdots)^{1/p} $ defines a bounded trace class operator. Then
\[
\Phi_p(h(a))\le\frac{p}{p-1}\,\Phi_p(a),
\]
where $ h $ is the Hardy operator (\ref{hardy's operator}).
\end{theorem}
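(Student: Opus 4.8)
The plan is to deduce this directly from the operator Hardy inequality~(\ref{hardy's operator inequality}) established in the first Corollary, by passing to the $p$-th root and then taking the trace; no new convexity argument is required. The guiding observation is that $\Phi_p$ is obtained from the operator $\sum a_n^p$ by applying an operator monotone map and then the trace, and each of these two operations preserves inequalities. So once one has the operator inequality in hand, the functional inequality with the reduced constant $p/(p-1)$ should follow almost mechanically.

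First I would abbreviate
\[
A=\sum_{n=1}^\infty a_n^p\qquad\text{and}\qquad B=\sum_{n=1}^\infty\left(\frac{1}{n}\sum_{k=1}^n a_k\right)^p,
\]
so that $\Phi_p(a)=\Tr A^{1/p}$ and $\Phi_p(h(a))=\Tr B^{1/p}.$ Since $1<p\le 2,$ the Corollary supplies the operator inequality
\[
B\le\left(\frac{p}{p-1}\right)^p A.
\]
Next, because $1<p\le 2$ we have $0<1/p\le 1,$ and the power function $t\mapsto t^{1/p}$ is operator monotone on $[0,\infty).$ Applying it to the preceding inequality, and using that the positive scalar constant factors out of the root, gives
\[
B^{1/p}\le\left(\left(\frac{p}{p-1}\right)^p A\right)^{1/p}=\frac{p}{p-1}\,A^{1/p}.
\]
Finally, the trace is monotone with respect to the operator order on self-adjoint operators, so taking traces yields
\[
\Tr B^{1/p}\le\frac{p}{p-1}\,\Tr A^{1/p},
\]
which is precisely $\Phi_p(h(a))\le\frac{p}{p-1}\,\Phi_p(a).$

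The argument is short because all the genuine work is already contained in the operator Hardy inequality, and the two order-preserving steps (operator monotonicity of the root and monotonicity of the trace) do the rest. The only points that need care are of a bookkeeping nature: the hypothesis that $A^{1/p}$ is trace class forces $A$ to be a bounded operator, which is exactly what the Corollary requires, so that $B\le(p/(p-1))^pA$ is legitimate; and the monotonicity $B^{1/p}\le(p/(p-1))A^{1/p}$ then exhibits $B^{1/p}$ as dominated by a trace class operator, hence itself trace class, so that $\Phi_p(h(a))$ is well defined and both traces above are finite. I do not expect a serious obstacle here. The one conceptual step worth isolating is the realization that operator monotonicity of $t\mapsto t^{1/p}$ is what converts the operator inequality into the trace-functional inequality while simultaneously taking the $p$-th root of the constant; the remaining delicacy, confirming that $h(a)$ lands in the domain of $\Phi_p,$ is certified automatically by the inequality chain.
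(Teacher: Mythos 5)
Your proposal is correct and is essentially identical to the paper's own proof: the paper likewise invokes the operator Hardy inequality (\ref{hardy's operator inequality}), applies operator monotonicity of $t\mapsto t^{1/p}$ (valid since $0<1/p\le 1$) to extract the root and reduce the constant to $p/(p-1)$, and then takes the trace. Your additional bookkeeping remarks (boundedness of $\sum a_n^p$ and the trace class domination of $B^{1/p}$) are accurate refinements of details the paper leaves implicit.
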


\begin{proof}
Since $ t\to t^\alpha $ is operator monotone for $ 0\le\alpha\le 1, $ it follows from the non-commutative Hardy inequality
(\ref{hardy's operator inequality}) that
\[
\left(\sum_{n=1}^\infty\left[\frac{1}{n}\sum_{k=1}^n a_k\right]^{p\:}\right)^{1/p}
\le\frac{p}{p-1}\left(\sum_{n=1}^\infty a_n^p\right)^{1/p}
\]
for $ 1<p\le 2. $ The assertion then follows by taking the trace.
\end{proof}

\section{The tracial geometric mean}

The operator power mean\footnote{Sometimes $ p $ is replaced with $ 1/p $ in the literature
\cite{kn:furuta:2005}.} is defined by
\[
M_p(a_1,\dots,a_n)=\left(\frac{1}{n}\sum_{k=1}^n a_k^{1/p}\right)^p.
\]
Let now $ q\ge p\ge 1. $ Since $ t\to t^{p/q} $ is operator concave we obtain
\[
M_p(a_1,\dots,a_n)^{1/q}=\left(\frac{1}{n}\sum_{k=1}^n a_k^{1/p}\right)^{p/q}
\ge \frac{1}{n}\sum_{k=1}^n a_k^{1/q}=M_q(a_1,\dots,a_n)^{1/q},
\]
and since $ t\to t^q $ is monotone under a trace, that is $ X\to \Tr X^q $ is monotone increasing in positive definite $ X, $ we also obtain
\[
\Tr M_p(a_1,\dots,a_n)\ge\Tr M_q(a_1,\dots,a_n).
\]
The trace of the operator mean is thus a decreasing function in the power parameter $ p, $
so the following definition makes sense:

\begin{definition}
We define the tracial geometric mean by setting
\[
\text{TG}(a_1,\dots,a_n)=\lim_{p\to\infty}\Tr \left(\frac{1}{n}\sum_{k=1}^n a_k^{1/p}\right)^p
\]
for positive semi-definite trace class operators $ a_1,\dots,a_n $ on a Hilbert space.
\end{definition}

Notice (by setting $ p=1) $ the inequality
\[
\Tr\left(\frac{a_1 +\cdots+a_n}{n}\right) \ge \text{TG}(a_1,\dots,a_n).
\]
The tracial geometric mean has a number of attractive properties:

\begin{theorem} Let $ a_1,\dots,a_n $ be positive semi-definite bounded operators on a Hilbert space
and consider the tracial geometric mean $ TG(a_1,\dots,a_n). $

 \begin{list}{(\arabic{std})}{\usecounter{std}}

\item It coincides with the geometric mean $ (a_1\cdots a_n)^{1/n} $ for positive numbers and with $ \Tr (a_1\cdots a_n)^{1/n} $ for commuting operators.

\item It is a (tracial) mean; $ \text{TG}(a,\dots,a)=\Tr a. $

\item It is homogeneous; $ \text{TG}(t a_1,\dots,t a_n)=t\cdot\text{TG}(a_1,\dots,a_n) $ for $ t>0. $

\item It is symmetric in the entries.

\item It is unitarily invariant.

\item It acts additively on sequences of block matrices.

\item It is (separately) increasing in the entries.

\item It is (jointly) concave in the entries.

\end{list}

\end{theorem}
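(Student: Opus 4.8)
The plan is to prove each of the eight properties for the finite-$p$ functional $\Tr M_p(a_1,\dots,a_n)$ and then let $p\to\infty$; recall that this quantity is nonnegative and decreasing in $p$, so the limit defining $\text{TG}$ exists and inherits every property that is preserved under pointwise limits of functions. Properties (2)--(6) already hold at each finite $p$ and pass to the limit verbatim. Indeed, $M_p(a,\dots,a)=(a^{1/p})^p=a$ gives (2); the homogeneity $M_p(ta_1,\dots,ta_n)=t\,M_p(a_1,\dots,a_n)$ gives (3); the manifest symmetry of $\frac1n\sum_k a_k^{1/p}$ gives (4); the relation $M_p(Ua_1U^*,\dots,Ua_nU^*)=U\,M_p(a_1,\dots,a_n)\,U^*$ together with cyclicity of the trace gives (5); and, since the continuous functional calculus respects orthogonal direct sums, $M_p$ of a block-diagonal sequence is the direct sum of the block $M_p$'s, so $\Tr M_p$ and hence $\text{TG}$ are additive, giving (6). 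I would dispatch all of these first.

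For (1) I would compute the limit explicitly in the commuting case, which contains the scalar case. Commuting positive operators can be represented simultaneously as multiplication operators, and in that representation $a_k^{1/p}=\exp\!\big(p^{-1}\log a_k\big)=1+p^{-1}\log a_k+o(p^{-1})$, so that $M_p(a_1,\dots,a_n)$ decreases to $\exp\!\big(\tfrac1n\sum_k\log a_k\big)=(a_1\cdots a_n)^{1/n}$ (this is just the statement that the scalar power mean is increasing in its exponent $1/p$). Monotone convergence then moves the limit through the trace to give $\Tr(a_1\cdots a_n)^{1/n}$, and in the scalar case the trace is the identity, yielding the geometric mean; common kernels are handled by continuity.

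For the separate monotonicity (7), fix all entries but one and assume $a_1\le a_1'$. Because $t\mapsto t^{1/p}$ is operator monotone for $p\ge1$, the averages satisfy $\frac1n\sum_k a_k^{1/p}\le\frac1n\big((a_1')^{1/p}+\sum_{k\ge2}a_k^{1/p}\big)$ as operators. The map $X\mapsto X^p$ is not operator monotone, but $X\mapsto\Tr X^p$ is monotone on positive operators for $p\ge1$; hence $\Tr M_p(a_1,\dots)\le\Tr M_p(a_1',\dots)$ for every $p$, and the inequality survives the limit.

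The substantive point, and the step I expect to be the main obstacle, is the joint concavity (8). Here I would reduce $\Tr M_p$ to the Epstein functional $\Phi$ discussed above. Writing $\tilde p=1/p\in(0,1)$ and using the identity $\frac1n a_k^{1/p}=(n^{-p}a_k)^{\tilde p}$, one has
\[
\Tr M_p(a_1,\dots,a_n)=\Tr\left(\sum_{k=1}^n (n^{-p}a_k)^{\tilde p}\right)^{1/\tilde p}=\Phi_{\tilde p}(n^{-p}a_1,\dots,n^{-p}a_n).
\]
Since $\tilde p\in(0,1)$, the concavity of $\Phi_{\tilde p}$ (Epstein's theorem, as quoted earlier) applies, and precomposition with the linear map $a\mapsto(n^{-p}a_1,\dots,n^{-p}a_n)$ preserves concavity; thus $\Tr M_p$ is jointly concave for each $p>1$, and as a pointwise limit of concave functions $\text{TG}$ is concave. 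The only real work beyond invoking Epstein's theorem is this rescaling identity, together with the routine observation that existence of the limit, and preservation of concavity, monotonicity and additivity, all follow from the monotonicity of $\Tr M_p$ in $p$ and the stability of these properties under pointwise limits.
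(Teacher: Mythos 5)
Your proposal is correct and follows essentially the same route as the paper: establish each property for $\Tr M_p(a_1,\dots,a_n)$ at finite $p$ (operator monotonicity of $t\mapsto t^{1/p}$ plus monotonicity of $X\mapsto\Tr X^p$ for (7), Epstein's theorem for (8)) and pass to the decreasing limit $p\to\infty$. Your rescaling identity $\Tr M_p(a_1,\dots,a_n)=\Phi_{1/p}(n^{-p}a_1,\dots,n^{-p}a_n)$ merely makes explicit the reduction to the concave functional $\Phi_{\tilde p}$, $0<\tilde p\le 1$, that the paper invokes by direct citation of Epstein and Carlen--Lieb, and your other verifications fill in details the paper dismisses as ``quite obvious.''
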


\begin{proof}

The first statement for positive numbers is a classical result and $ (2) $ to $ (6) $ are quite obvious.

To prove $ (7) $ we notice that $ M_p(a_1,\dots,a_n)^{1/p} $ is separately increasing
in the entries by operator monotonicity of $ t\to t^{1/p}. $ We therefore obtain, as above, that
$ \Tr M_p(a_1,\dots,a_n), $ for each $ p\ge 1, $ is separately increasing in the entries. This property
is hence satisfied also for the tracial geometric mean by taking the limit $ p\to\infty. $

To prove $ (8) $ we notice that $ \Tr M_p(a_1,\dots,a_n), $ for each $ p\ge 1, $ is a concave function in the entries by Epstein's theorem, cf. \cite{kn:epstein:1973} and \cite[Theorem 1.1]{kn:carlen:2008}. Taking the limit $ p\to\infty $ we therefore realize that also the tracial geometric mean is a concave function in the entries.
\end{proof}

The last property $ (8) $ is by far the most difficult to establish. The concrete formula given in the next theorem is related to \cite[Theorem 4.19]{kn:furuta:2005}. It is interesting to notice that it is of no use to prove $ (8) $ in the previous result.

\begin{theorem}
If the operators $ a_1,\dots,a_n $ are strictly positive, then the tracial geometric mean
\begin{equation}\label{formula: tracial geometric mean}
TG(a_1,\dots,a_n)=\Tr\exp\left(\frac{1}{n}\sum_{k=1}^n \log a_k\right).
\end{equation}
\end{theorem}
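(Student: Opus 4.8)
The plan is to compute the limit defining the tracial geometric mean directly, by expanding around the identity. Setting $ s=1/p, $ so that $ s\to 0^+ $ as $ p\to\infty, $ the operator inside the trace becomes
\[
\left(\frac{1}{n}\sum_{k=1}^n a_k^{s}\right)^{1/s}.
\]
The guiding idea is that this is the operator version of the elementary limit $ (1+sx)^{1/s}\to e^x, $ the point being that the average of the $ a_k^s $ sits close to the identity for small $ s $ and carries its logarithmic content in the first-order term.

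First I would expand each $ a_k^s=\exp(s\log a_k) $ by the analytic functional calculus, which is legitimate since strict positivity forces the spectra of the $ a_k $ to be bounded away from zero, so that $ \log a_k $ is a bounded operator. This yields $ a_k^s=I+s\log a_k+O(s^2) $ in operator norm, and upon averaging
\[
\frac{1}{n}\sum_{k=1}^n a_k^{s}=I+sL+O(s^2),\qquad L=\frac{1}{n}\sum_{k=1}^n\log a_k .
\]
Because this expression tends to $ I $ as $ s\to 0, $ it eventually lies in the region where the operator logarithm is analytic, where $ \log(I+Y)=Y+O(\|Y\|^2), $ and hence
\[
\frac{1}{s}\log\left(\frac{1}{n}\sum_{k=1}^n a_k^{s}\right)=\frac{1}{s}\bigl(sL+O(s^2)\bigr)=L+O(s)\longrightarrow L .
\]

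Applying the continuous exponential function I would then conclude
\[
\left(\frac{1}{n}\sum_{k=1}^n a_k^{s}\right)^{1/s}
=\exp\left(\frac{1}{s}\log\left(\frac{1}{n}\sum_{k=1}^n a_k^{s}\right)\right)\longrightarrow\exp(L),
\]
and taking the trace would give formula (\ref{formula: tracial geometric mean}). I expect the main obstacle to be the rigorous interchange of limit and trace in the infinite-dimensional setting: the convergence above is in operator norm, whereas passing the limit through the trace requires convergence in trace norm (or a dominated-convergence estimate on the eigenvalues), together with the tacit hypothesis that $ \exp(L) $ is trace class so that the right-hand side is finite. In finite dimensions all of this is automatic since all norms are equivalent and the trace is norm-continuous; in general it should be controllable through the uniform spectral bounds on the $ a_k $ that follow from strict positivity.
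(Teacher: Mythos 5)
Your proof is correct, but it takes a genuinely different route from the paper's. You compute the limit directly by a first-order norm expansion: $a_k^{1/p}=I+\tfrac{1}{p}\log a_k+O(p^{-2})$, so the average is $I+\tfrac{1}{p}L+O(p^{-2})$ with $L=\tfrac{1}{n}\sum_k\log a_k$, whence $M_p(a_1,\dots,a_n)\to\exp(L)$ in operator norm --- a Lie--Trotter-type argument that actually proves more than the theorem asserts, namely convergence of the power means themselves, not merely of their traces. The paper instead sandwiches $\Tr M_p(a_1,\dots,a_n)$ between two one-sided bounds built from standard operator inequalities: operator concavity of the logarithm gives $p\log\bigl(\tfrac{1}{n}\sum_k a_k^{1/p}\bigr)\ge L$, hence by monotonicity of $X\mapsto\Tr e^X$ under the operator order $\Tr M_p\ge\Tr\exp(L)$ for \emph{every} $p\ge 1$; and $\log t\le t-1$ gives $\Tr M_p\le\Tr\exp\bigl(\tfrac{1}{n}\sum_k p(a_k^{1/p}-1)\bigr)$, whose right-hand side tends to $\Tr\exp(L)$ because $p(a_k^{1/p}-1)\to\log a_k$. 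What each approach buys: the paper's needs no analyticity or expansion machinery and delivers the uniform lower bound $\Tr M_p\ge\Tr\exp(L)$ for all finite $p$, so that (with the monotonicity in $p$ established before the definition of $TG$) the convergence is monotone from above; yours identifies the limit in a single computation and yields the stronger operator-norm statement, at the price of the analytic functional calculus.

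One remark on the ``main obstacle'' you flag, the interchange of limit and trace in infinite dimensions: under the theorem's actual hypotheses it is vacuous. The tracial geometric mean is defined for trace class operators, and an operator whose spectrum is bounded away from zero --- exactly what you invoke to make $\log a_k$ bounded --- is invertible, hence never compact, let alone trace class, on an infinite-dimensional space. So the statement is effectively confined to the matrix setting, where the trace is norm-continuous and your argument closes with no further estimates. Conversely, if ``strictly positive'' were read as mere injectivity, your claim that the spectra are bounded below would fail and $\log a_k$ would be unbounded; but in that regime the paper's proof would need reinterpretation as well, so this is a defect of neither argument relative to the other.
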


\begin{proof}
Since the logarithm is operator concave, we have
\[
p\cdot\log\left(\frac{1}{n}\sum_{k=1}^n a_k^{1/p}\right)\ge\frac{1}{n}\sum_{k=1}^n \log a_k
\]
and hence under the trace
\[
\begin{array}{rl}
M_p(a_1,\dots,a_k)&\displaystyle
=\Tr\exp\left[p\cdot\log\left(\frac{1}{n}\sum_{k=1}^n a_k^{1/p}\right)\right]\\[4ex]
&\displaystyle\ge\Tr\exp\left(\frac{1}{n}\sum_{k=1}^n \log a_k\right)
\end{array}
\]
for any $ p\ge 1. $ Taking the limit $ p\to\infty $ we thus obtain that the left hand side in (\ref{formula: tracial geometric mean}) majorizes the right hand side.
To verify the opposite inequality we use $ \log t\le t-1 $ to obtain
\[
\begin{array}{rl}
\log M_p(a_1,\dots,a_k)&\displaystyle
=p\cdot\log\left(\frac{1}{n}\sum_{k=1}^n a_k^{1/p}\right)\\[4ex]
&\displaystyle\le\frac{1}{n}\sum_{k=1}^n p(a_k^{1/p} - 1)
\end{array}
\]
and hence under the trace
\[
TG(a_1,\dots,a_n)\le
\Tr M_p(a_1,\dots,a_n)\le\Tr\exp\left(\frac{1}{n}\sum_{k=1}^n p(a_k^{1/p} - 1)\right)
\]
for any $ p\ge 1. $ Since $ p(a_k^{1/p}-1)\to \log a_k $ in the strong operator topology for $ p\to\infty, $ the assertion follows.
\end{proof}

The following theorem is a non-commutative version of Carleman's inequality \cite{kn:carleman:1923}.

\begin{theorem}
The inequality
\[
\sum_{n=1}^\infty \text{TG}(a_1,\dots,a_n)\le e\sum_{n=1}^n \Tr a_n
\]
is valid for sequences of positive semi-definite trace class operators such that the sum on the right hand side is finite.
\end{theorem}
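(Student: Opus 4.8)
The plan is to obtain the inequality as the limiting case $p\to\infty$ of the tracial Hardy inequality (\ref{hardy's tracial operator inequality}), mirroring the way the classical Carleman inequality is deduced from the classical Hardy inequality.

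First I would apply the tracial Hardy inequality (\ref{hardy's tracial operator inequality}) to the sequence $b_k=a_k^{1/p}$ for a fixed real number $p>1.$ Its hypothesis is met, since $\sum_{k} b_k^{\,p}=\sum_k a_k$ is a positive trace class operator: the partial sums form an increasing net of positive operators with traces bounded by the finite quantity $\sum_n\Tr a_n,$ hence converge in trace norm. Because $b_n^{\,p}=a_n$ and $\bigl(\frac1n\sum_{k=1}^n b_k\bigr)^p=M_p(a_1,\dots,a_n),$ the inequality becomes
\[
\sum_{n=1}^\infty \Tr M_p(a_1,\dots,a_n)\le\left(\frac{p}{p-1}\right)^p\sum_{n=1}^\infty\Tr a_n
\]
for every $p>1.$

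The heart of the argument is the passage $p\to\infty,$ which has to be interchanged with the summation; I would circumvent this by truncating first. For a fixed $N$ the finite partial sum satisfies
\[
\sum_{n=1}^N \Tr M_p(a_1,\dots,a_n)\le\left(\frac{p}{p-1}\right)^p\sum_{n=1}^\infty\Tr a_n,
\]
and since the left-hand side now contains only finitely many terms the limit $p\to\infty$ may be taken term by term. By the very definition of the tracial geometric mean each summand satisfies $\Tr M_p(a_1,\dots,a_n)\to\text{TG}(a_1,\dots,a_n),$ while $(p/(p-1))^p\to e.$ Hence
\[
\sum_{n=1}^N \text{TG}(a_1,\dots,a_n)\le e\sum_{n=1}^\infty\Tr a_n .
\]
Letting $N\to\infty,$ the left-hand side increases to $\sum_{n=1}^\infty \text{TG}(a_1,\dots,a_n)$ because every term is non-negative, and the asserted inequality follows.

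I expect the main obstacle to be precisely the justification of this double limit. Its resolution rests on facts established earlier in the paper: $\Tr M_p(a_1,\dots,a_n)$ is monotonically decreasing in $p,$ which guarantees both that the limit defining $\text{TG}(a_1,\dots,a_n)$ exists and that each truncated partial sum converges; and the right-hand bound is a single constant multiple of the fixed, $N$-independent quantity $\sum_n\Tr a_n.$ Truncating to finite $N$ before sending $p\to\infty$ avoids any question of uniform convergence, and a monotone-convergence step then removes the truncation, so no delicate interchange of limit and infinite sum is actually required.
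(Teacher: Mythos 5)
Your proposal is correct and follows essentially the same route as the paper: substitute $a_n^{1/p}$ into the tracial Hardy inequality (\ref{hardy's tracial operator inequality}) and let $p\to\infty$, using $(p/(p-1))^p\to e$. In fact your truncate-to-$N$, take the termwise limit via the monotonicity of $p\mapsto\Tr M_p(a_1,\dots,a_n)$, then apply monotone convergence argument carefully justifies the limit interchange that the paper's one-line proof leaves implicit.
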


\begin{proof}
If we in the tracial Hardy inequality (\ref{hardy's tracial operator inequality})
replace $ a_n $ with $ a_n^{1/p} $ for each $ n $ then we obtain
\[
\Tr\sum_{n=1}^\infty\left(\frac{1}{n}\sum_{k=1}^n a_k^{1/p}\right)^p
\le\left(\frac{p}{p-1}\right)^p \Tr\sum_{n=1}^\infty a_n
\]
for $ p>1. $ Since
\[
\left(\frac{p}{p-1}\right)^p\to e\quad\text{for}\quad p\to\infty
\]
the statement follows by letting $ p\to\infty $ in the inequality.
\end{proof}

\nocite{kn:kufner:2003:1}
\nocite{kn:kufner:2006:1}
\nocite{kn:ando:2004:1}
\nocite{kn:bhatia:2006:1}

{\small


\vfill

      \noindent Frank Hansen: Department of Economics, University
       of Copenhagen, Studiestraede 6, DK-1455 Copenhagen K, Denmark.}

\end{document}